\PassOptionsToPackage{giveninits=true}{biblatex}
\documentclass[arxiv=true]{agn_article}
\usepackage{agn_all}
\addbibresource{bib_agn.bib}
\addbibresource{bib_main.bib}
\addbibresource{bib_voss.bib}
\usepackage{hyperref}
\usepackage{caption}
\newcommand{\sample}{343}
\newcommand{\graphRatio}{.63}

\newcommand{\Cm}{C_m}

\newcommand{\WADM}{W_{\mathrm{ADM}}}
\begin{document}
\title{Quasiconvex relaxation of isotropic functions in incompressible planar hyperelasticity}
\date{\today}
\author{%
	Robert J. Martin%
	\thanks{\;\;%
		Corresponding author: Robert J. Martin, Lehrstuhl f\"{u}r Nichtlineare Analysis und Modellierung, Fakult\"at f\"ur Mathematik, Universit\"at Duisburg--Essen, Campus Essen, Thea-Leymann Stra{\ss}e 9, 45141 Essen, Germany,
		email: \href{mailto:robert.martin@uni-due.de}{robert.martin@uni-due.de}%
	},
	\quad
	Jendrik Voss\thanks{\;\;%
		Jendrik Voss, Lehrstuhl f\"{u}r Nichtlineare Analysis und Modellierung, Fakult\"{a}t f\"{u}r Mathematik, Universit\"{a}t Duisburg--Essen, Thea-Leymann Str. 9, 45127 Essen, Germany;
		email: \href{mailto:max.voss@uni-due.de}{max.voss@uni-due.de}%
	},
	\quad
	Ionel-Dumitrel Ghiba\thanks{\;\;%
		Ionel-Dumitrel Ghiba, Alexandru Ioan Cuza University of Ia\c si, Department of Mathematics, Blvd.~Carol I, no.\,11, 700506 Ia\c si, Romania; and Octav Mayer Institute of Mathematics of the Romanian Academy, Ia\c si Branch, 700505 Ia\c si,
		email: \href{mailto:dumitrel.ghiba@uaic.ro}{dumitrel.ghiba@uaic.ro}%
	}
	~\quad and\quad%
	Patrizio Neff\thanks{\;\;%
		Patrizio Neff, Head of Lehrstuhl f\"{u}r Nichtlineare Analysis und Modellierung, Fakult\"at f\"ur Mathematik, Universit\"at Duisburg--Essen, Campus Essen, Thea-Leymann Stra{\ss}e 9, 45141 Essen, Germany,
		email: \href{mailto:patrizio.neff@uni-due.de}{patrizio.neff@uni-due.de}%
	}
}
\maketitle
\begin{abstract}
	\noindent
	In this note, we provide an explicit formula for computing the quasiconvex envelope of any real-valued function $W\col\SL(2)\to\R$ with $W(RF)=W(FR)=W(F)$ for all $F\in\SL(2)$ and all $R\in\SO(2)$, where $\SL(2)$ and $\SO(2)$ denote the special linear group and the special orthogonal group, respectively. In order to obtain our result, we combine earlier work by Dacorogna and Koshigoe on the relaxation of certain conformal planar energy functions with a recent result on the equivalence between polyconvexity and rank-one convexity for objective and isotropic energies in planar incompressible nonlinear elasticity.
\end{abstract}

\vspace*{1.4em}
\noindent{\textbf{Key words:} quasiconvexity, rank-one convexity, polyconvexity, quasiconvex envelopes, nonlinear elasticity, incompressibility, hyperelasticity, relaxation, microstructure}
\\[1.4em]
\noindent\textbf{AMS 2010 subject classification:
	26B25, %
	26A51, %
	49J45, %
	74B20  %
}
\tableofcontents
\newpage

\section{Introduction}

A classical task in nonlinear hyperelasticity is to minimize an energy functional of the form
\begin{equation}\label{eq:generalEnergyFunctional}
	I\col W^{1,p}(\Omega;\R^n)\to\R\,,\quad I(\varphi)=\int_\Omega W(\grad\varphi(x))\,\dx
\end{equation}
under certain boundary conditions, where $\Omega\subset\R^n$ represents the reference configuration of an elastic body. The elastic behaviour of the body is completely determined by the choice of a particular energy density $W$ depending on the deformation gradient $F=\grad\varphi$. In the compressible case, since the exclusion of (local) self-intersection implies $\det F>0$, the domain of the energy $W$ is restricted to the group $\GLpn$ of $n\times n$\.--\.matrices with positive determinant. Modeling deformations of incompressible materials \cite{fosdick1986minimization}, on the other hand, requires the stronger constraint $\det F=1$; in this case, the natural domain of the energy is given by the special linear group $\SLn$.

In order to ensure the existence of minimizers for functionals of the form \eqref{eq:generalEnergyFunctional}, it is necessary to pose additional conditions on the energy density $W$. The most common requirements for this purpose are certain \emph{generalized convexity properties}: Since classical convexity of $W$ leads to physically unreasonable material behaviour \cite{silhavy1997mechanics}, weakened notions of convexity are usually considered, the most important ones being \emph{rank-one convexity}, \emph{quasiconvexity} and \emph{polyconvexity}.

Compared to functions defined on the full matrix space $\Rnn$, the restricted domain of the energy $W$ poses additional challenges with respect to these convexity properties (a number of which were famously addressed and solved by John Ball in his seminal 1977 paper \cite{ball1976convexity,ball1977constitutive}), but also allow for obtaining some significantly simplified criteria. In particular, under the additional assumptions of objectivity and isotropy, a large number of necessary and sufficient criteria for rank-one convexity and polyconvexity of energy functions on $\GLpn$ and $\SLn$ have been given in the literature \cite{knowles1976failure,knowles1978failure,abeyaratne1980discontinuous,dacorogna1993different,rosakis1994,aubert1995necessary,silhavy1997mechanics,Dunn2003,mielke2005necessary,agn_martin2015rank}.

In the two-dimensional case of planar elasticity, the above generalized convexity properties can be simplified even further. In addition to the well-known observation that polyconvexity and convexity\footnote{%
	A function $W\col M\to\R$ on a non-convex domain $M\subset\Rnn$, e.g.\ on $M=\GLpn$ or $M=\SLn$, is called convex if there exists a convex function $\Wtilde\col\Rnn\to\R\cup\{+\infty\}$ with $\Wtilde(F)=W(F)$ for all $F\in M$, cf.\ Definition \ref{def:convexityPropertiesSL}.
}
of a function $W\col\SL(2)\to\R$ are equivalent,\footnote{%
	Under the constraint $\det F=1$, any polyconvex representation $W(F)=P(F,\det F)$ can be reduced to a convex function in terms of $F$, cf.\ \cite[Lemma B.1]{agn_ghiba2018rank}.%
}
it was recently demonstrated that in the planar incompressible case, these properties are in turn equivalent to the (generally weaker) rank-one convexity and quasiconvexity for isotropic and objective energy functions \cite{agn_ghiba2018rank}. Based on these earlier results, this note provides an explicit relaxation result which allows for a direct computation of the quasiconvex envelope of any isotropic and objective function $W\col\SL(2)\to\R$.

\section{\boldmath Generalized convexity properties of incompressible energy functions}

Apart from classical convexity, we will consider the following weakened convexity properties of planar energy functions with values in $\R\cup\{+\infty\}$.

\begin{definition}\label{def:convexityPropertiesGloballyDefinedFunctions}
	Let $W\col\R^{2\times2}\to\R\cup\{+\infty\}$. Then $W$ is called
	\begin{itemize}
		\item
			\emph{rank-one convex} if for all $F\in\R^{2\times2}$, $t\in[0,1]$ and $H\in\R^{2\times2}$ with $\rank(H)=1$,
			\begin{equation*}
				W((1-t)F+t(F+H)) \;\leq\; (1-t)\.W(F) + t\.W(F+H)\,;
			\end{equation*}
		\item
		\emph{quasiconvex} if for every bounded open set $\Omega\subset\R^2$ and all smooth functions $\vartheta\in C_0^\infty (\Omega)$ with compact support, %
		\begin{equation*}
			\int_{\Omega}W(F_0+\nabla \vartheta)\,\dx\geq \int_{\Omega}W(F_0)\,\dx=W(F_0)\cdot \abs{\Omega}\,;
		\end{equation*}
		\item
			\emph{polyconvex} if
			\begin{equation*}
				W(F) = P(F,\det F) \qquad\text{for some convex function }\;P\col\R^{2\times2}\times\R \;\cong\; \R^5\to\R\cup\{+\infty\}\,.
			\end{equation*}
	\end{itemize}
\end{definition}
For an \emph{incompressible} planar energy, i.e.\ a finite-valued function $W\col\SL(2)\to\R$ defined on the domain $\SL(2)$ only, we will employ the following definitions.

\begin{definition}\label{def:convexityPropertiesSL}
	Let $W\col\SL(2)\to\R$. Then $W$ is called rank-one convex [quasiconvex/polyconvex] if the function
	\[
		\What\col\R^{2\times2}\to\R\cup\{+\infty\}\,,\quad \What(F) =
		\begin{cases}
			W(F) &\col F\in\SL(2)\,,\\
			+\infty &\col F\notin\SL(2)\,.
		\end{cases}
	\]
	is rank-one convex [quasiconvex/polyconvex] in the sense of Definition \ref{def:convexityPropertiesGloballyDefinedFunctions}. Furthermore, $W$ is called convex if there exists a convex function $\Wtilde\col\R^{2\times2}\to\R\cup\{+\infty\}$ such that $\Wtilde(F)=W(F)$ for all $F\in\SL(2)$.
\end{definition}

Defining quasiconvexity for functions which may attain the value $+\infty$ is often avoided completely since, in this case, it no longer implies the weak lower semicontinuity of the associated energy functional \cite{Dacorogna08,ball1976convexity}. Furthermore, a quasiconvex function with values in $\R\cup\{+\infty\}$ is not necessarily rank-one convex in general \cite{Dacorogna08}. However, for the incompressible case considered here (i.e.\ $W(F)=+\infty$ if and only if $F\notin\SL(2)$), quasiconvexity of $W$ does indeed imply rank-one convexity, as shown by Conti \cite{conti2008quasiconvex}.

Note also that for $W\col\SL(2)\to\R$, the existence of a polyconvex representation $W(F)=P(F,\det F)$ can be reduced to the case where $P(F,d)=+\infty$ if and only if $d\neq1$. Thereby, $P$ is reduced to a (convex) function in terms of $F$, which implies that $W$ is polyconvex if and only if $W$ (or rather an extension of $W$ to the domain $\R^{2\times2}$) is convex (cf.\ \cite[Lemma B.1]{agn_ghiba2018rank}).

\subsection{Generalized convex envelopes}

For each of the convexity properties considered in the previous section, we can define a corresponding \emph{envelope} of an arbitrary function on $\R^{2\times2}$.

\begin{definition}\label{def:envelopesGloballyDefinedFunctions}
	Let $W\col\R^{2\times2}\to\R\cup\{+\infty\}$ be bounded below. Then the \emph{rank-one convex}, \emph{quasiconvex}, \emph{polyconvex} and \emph{convex} \emph{envelopes} $RW,QW,PW,CW\col\R^{2\times2}\to\R\cup\{+\infty\}$ of $W$ are respectively defined by
	\begin{alignat*}{2}
		RW(F) &= \sup \{ w(F) \setvert w\col\R^{2\times2}\to\R\cup\{+\infty\} \,\text{ rank-one convex, }\;& w(X)&\leq W(X) \,\text{ for all }X\in\R^{2\times2} \}\,,\\
		QW(F) &= \sup \{ w(F) \setvert w\col\R^{2\times2}\to\R\cup\{+\infty\} \,\text{ quasiconvex, }\;& w(X)&\leq W(X) \,\text{ for all }X\in\R^{2\times2} \}\,,\\
		PW(F) &= \sup \{ w(F) \setvert w\col\R^{2\times2}\to\R\cup\{+\infty\} \,\text{ polyconvex, }\;& w(X)&\leq W(X) \,\text{ for all }X\in\R^{2\times2} \}\,,\\
		CW(F) &= \sup \{ w(F) \setvert w\col\R^{2\times2}\to\R\cup\{+\infty\} \,\text{ convex, }\;& w(X)&\leq W(X) \,\text{ for all }X\in\R^{2\times2} \}\,.
	\end{alignat*}
\end{definition}
\noindent
Again, these definitions can be applied to functions defined on $\SL(2)$ via the natural extension of the domain to $\R^{2\times2}$.

\begin{definition}\label{def:envelopesSL}
	Let $W\col\SL(2)\to\R$. Then the \emph{rank-one convex}, \emph{quasiconvex}, \emph{polyconvex} and \emph{convex} \emph{envelope} of $W$ are defined by
	\begin{align*}
		RW = (R\What)\big|_{\SL(2)}\,,\qquad
		QW = (Q\What)\big|_{\SL(2)}\,,\qquad
		PW = (P\What)\big|_{\SL(2)}\,,\qquad
		CW = (C\What)\big|_{\SL(2)}
	\end{align*}
	in the sense of Definition \ref{def:envelopesGloballyDefinedFunctions}, where
	\[
		\What\col\R^{2\times2}\to\R\cup\{+\infty\}\,,\quad \What(F) =
		\begin{cases}
			W(F) &\col F\in\SL(2)\,,\\
			+\infty &\col F\notin\SL(2)\,.
		\end{cases}
	\]
\end{definition}

\begin{remark}\label{remark:envelopesInequality}
	The implications
	\[
		W\;\text{ convex }
		\quad\implies\quad
		W\;\text{ polyconvex }
		\quad\implies\quad
		W\;\text{ quasiconvex }
		\quad\implies\quad
		W\;\text{ rank-one convex,}
	\]
	which hold for any function $W\col\SL(2)\to\R$ (cf.\ \cite{Dacorogna08,conti2008quasiconvex}), immediately imply the inequalities
	\[
		CW(F) \leq PW(F) \leq QW(F) \leq RW(F) \qquad\text{for all }\;F\in\SL(2)\,.
	\]
\end{remark}

The quasiconvex envelope, in particular, plays an important role in relaxation approaches to non-quasiconvex minimization problems: If, for an energy of the form \eqref{eq:generalEnergyFunctional}, the existence of minimizers under boundary conditions cannot be ensured, then the \emph{infimum} of the attained energy values might in many cases be obtained instead by minimizing the relaxed functional \cite[Chapter~9]{Dacorogna08}
\[
	I\col W^{1,p}(\Omega;\R^n)\to\R\,,\quad I(\varphi)=\int_\Omega QW(\grad\varphi(x))\,\dx\,.
\]
Such relaxation methods are used, for example, in the modeling of materials with complex microstructures \cite{kinderlehrer2012microstructure,conti2015analysis}.

In general, computing the quasiconvex envelope of a given energy $W$ is a rather difficult problem, with explicit representations being available only for a small number of special cases \cite{dacorogna1993different,ledret1995quasiconvex}. The main result of this note (Theorem \ref{theorem:convexEnvelopes}), however, shows that in the objective and isotropic case of planar incompressible energies, this task can be accomplished by simple analytical methods.

\section{\boldmath The quasiconvex envelope of objective and isotropic functions on $\SL(2)$}

It is well known that any objective and isotropic function $W\col\GLp(n)\to\R$ can be expressed in terms of singular values,\footnote{In nonlinear elasticity, the singular values of the deformation gradient $F\in\GLp(n)$, which coincide with the eigenvalues of both the material stretch tensor $U=\sqrt{F^TF}$ and the spatial stretch tensor $V=\sqrt{FF^T}$, are also called \emph{principal stretches}.} i.e.\ there exists a symmetric function $q\col(0,\infty)^n\to\R$ such that $W(F)=q(\lambda_1,\dotsc,\lambda_n)$ for all $F\in\GLpn$ with singular values $\lambda_1,\dotsc,\lambda_n$. The corresponding representation $W(F)=q(\lambda_1,\lambda_2)$ of a planar incompressible energy $W$ can be simplified even further.

\begin{lemma}\label{lemma:representationsSL}
	Let $W\col\SL(2)\to\R$ be an objective and isotropic function. Then there exist uniquely defined functions $q\col(0,\infty)\times(0,\infty)\to\R$,\; $\phi\col[0,\infty)\to\R$ and $\widetilde{\phi}\col\R\to\R$ such that for all $F\in\SL(2)$ with singular values $\lambda_1,\lambda_2$,
	\begin{equation}\label{eq:energyRepresentationSingularValueDifference}
		W(F) = q(\lambda_1,\lambda_2) = \widetilde{\phi}(\lambda_1-\lambda_2) = \phi\left(\lambdamax(F)-\frac{1}{\lambdamax(F)}\right)\,,
	\end{equation}
	where $\lambdamax(F)=\max\{\lambda_1,\lambda_2\}$.
\end{lemma}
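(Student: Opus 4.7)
The plan is to proceed in three short steps. First, I would invoke objectivity and isotropy together with the singular value decomposition to write any $F\in\SL(2)$ in the form $F = R\,\operatorname{diag}(\lambda_1,\lambda_2)\,Q^T$ with $R,Q\in\SO(2)$, so that $W(F) = W(\operatorname{diag}(\lambda_1,\lambda_2))$. The key observation for symmetry is that conjugation of $\operatorname{diag}(\lambda_1,\lambda_2)$ by the rotation through $\pi/2$ returns $\operatorname{diag}(\lambda_2,\lambda_1)$, so the induced two-variable function $(\lambda_1,\lambda_2)\mapsto W(\operatorname{diag}(\lambda_1,\lambda_2))$ is symmetric in its arguments; this furnishes $q\col(0,\infty)\times(0,\infty)\to\R$ (initially on the hyperbola $\lambda_1\lambda_2=1$, with a unique symmetric extension fixed below).

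Second, the incompressibility constraint $\lambda_1\lambda_2=1$ reduces the two singular values to the single parameter $\lambdamax(F)\in[1,\infty)$. The map
\[
	\psi\col[1,\infty)\to[0,\infty)\,,\qquad \psi(\lambda)=\lambda-\frac{1}{\lambda}
\]
is continuous, strictly increasing with $\psi(1)=0$ and $\psi(\lambda)\to\infty$, hence a homeomorphism. Setting $\phi(t)=q\bigl(\psi^{-1}(t),\,1/\psi^{-1}(t)\bigr)$ for $t\in[0,\infty)$ immediately yields the representation $W(F)=\phi\bigl(\lambdamax(F)-1/\lambdamax(F)\bigr)$. For the middle expression I would define $\widetilde{\phi}(s)=\phi(|s|)$ and use the identity $|\lambda_1-\lambda_2|=\lambdamax(F)-1/\lambdamax(F)$, which holds because the smaller singular value equals $1/\lambdamax(F)$; this gives $W(F)=\widetilde{\phi}(\lambda_1-\lambda_2)$.

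Third, uniqueness of $\phi$ is forced by bijectivity of $\psi$; uniqueness of $\widetilde{\phi}$ follows because its values on $[0,\infty)$ are thereby prescribed and, since swapping the labels of $\lambda_1$ and $\lambda_2$ negates their difference while leaving $W(F)$ fixed, $\widetilde{\phi}$ must be even, hence equals $\phi\circ|\cdot|$; and $q$ is pinned down on the hyperbola with the natural symmetric extension $q(\lambda_1,\lambda_2)=\widetilde{\phi}(\lambda_1-\lambda_2)$. I do not foresee a serious obstacle; the only point requiring slight care is that the symmetry of $q$ must be established using only proper rotations, since a straight coordinate swap is a reflection and lies outside $\SO(2)$, but the $\pi/2$-rotation conjugation handles this cleanly.
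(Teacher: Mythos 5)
Your proof is correct and complete; the paper itself states Lemma \ref{lemma:representationsSL} without proof, treating the representation as standard, and your argument (SVD with both orthogonal factors in $\SO(2)$ since $\det F>0$, symmetry of $q$ via conjugation by the $\pi/2$-rotation, and bijectivity of $\lambda\mapsto\lambda-1/\lambda$ on $[1,\infty)$) is exactly the reasoning the authors implicitly rely on. Your closing caveat about proper rotations versus the coordinate-swap reflection is well taken, and your observation that $q$ is only pinned down on the hyperbola $\lambda_1\lambda_2=1$ unless one reads the chain of equalities in \eqref{eq:energyRepresentationSingularValueDifference} as defining $q$ globally is a fair clarification of a point the lemma's statement leaves slightly loose.
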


Note that $q(x,y)=q(y,x)$ for all $x,y>0$ and $\widetilde{\phi}(-t)=\widetilde{\phi}(t)$ for all $t\in\R$ due to the isotropy of $W$. Furthermore, it is easy to see that for any real-valued function $\phi\col[0,\infty)\to\R$ or any $\widetilde{\phi}\col\R\to\R$ with $\widetilde{\phi}(-t)=\widetilde{\phi}(t)$ for all $t\in\R$, an objective and isotropic energy $W\col\SL(2)\to\R$ is defined by \eqref{eq:energyRepresentationSingularValueDifference}.

Different representations, for example in terms of the squared Frobenius matrix norm $\norm{F}^2=\sum_{i,j=1}^2F_{ij}^2$, have been considered in the literature as well \cite{abeyaratne1980discontinuous} (cf.\ \cite{Dunn2003}). However, expressing $W$ in the form \eqref{eq:energyRepresentationSingularValueDifference} allows for stating convexity criteria in particularly simple terms (cf.\ Theorem \ref{theorem:convexityCriterion}).

\medskip

In view of Lemma \ref{lemma:representationsSL}, the equality
\begin{equation}\label{eq:lambdaDifferenceDetRepresentation}
	\abs{\lambda_1-\lambda_2}
	= \sqrt{\norm{F}-2} = \sqrt{\norm{F}-2\.\det F}\,,
\end{equation}
which holds for any $F\in\SL(2)$, suggests a direct connection between the notion of convex envelopes in the incompressible case and an earlier relaxation result by Dacorogna and Koshigoe \cite{dacorogna1993different}.

\begin{proposition}[{\cite[Proposition 5.1]{dacorogna1993different}, cf.~\cite{vsilhavy2001rank}}]
\label{prop:dacorogna}
	Let $\Wbar\col\R^{2\times2}\to\R$ be of the form
	\begin{equation}
	\label{eq:dacorognaConformalSpecialCase}
		\Wbar\col\R^{2\times 2}\to\R\,,\quad \Wbar(F)= g(\sqrt{\norm{F}^2-2\.\det F})
	\end{equation}
	for some $g\col[0,\infty)\to\R$. Then
	\begin{equation}\label{eq:dacorognaResult}
		R\Wbar(F)=Q\Wbar(F)=P\Wbar(F)=C\Wbar(F)=\gtilde^{**}\bigl(\sqrt{\norm{F}^2-2\.\det F}\.\bigr)\,,
	\end{equation}
	where $\gtilde^*$ is the Legendre-transformation of the extended function
	\[
		\gtilde\col\R\to\R\,,\quad \gtilde(x)=
		\begin{cases}
		\hfill g(x) &:\; x\geq0\\
		g(-x) &:\; x< 0
		\end{cases}
	\]
	and $\gtilde^{**}=\left(\gtilde^*\right)^*$.	
\end{proposition}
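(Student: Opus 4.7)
\vspace{0.5em}
\noindent\textbf{Proof plan.}
The plan is to identify the quantity $h(F) := \sqrt{\norm{F}^2 - 2\.\det F}$ as (up to scaling) a seminorm on $\R^{2\times2}$, and then to establish the two bounds $\gtilde^{**}(h(F)) \leq C\Wbar(F)$ and $R\Wbar(F) \leq \gtilde^{**}(h(F))$ separately. A direct computation for $F = \begin{pmatrix} a & b \\ c & d \end{pmatrix}$ gives the identity $\norm{F}^2 - 2\.\det F = (a-d)^2 + (b+c)^2$, so $h(F)$ is the Euclidean norm of the vector $L(F) := (a-d,\,b+c) \in \R^2$, which depends linearly on $F$. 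In particular $h$ is convex on $\R^{2\times2}$, and one further verifies $h(H) = \norm{H}$ for every rank-one $H$ (using $\det H = 0$).

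For the easy direction, $\gtilde^{**}$ is convex and, since $\gtilde$ is even, also even, and therefore non-decreasing on $[0,\infty)$. Composition with $h\geq 0$ then yields a convex function $F \mapsto \gtilde^{**}(h(F))$ on $\R^{2\times2}$ dominated by $\gtilde(h(F)) = g(h(F)) = \Wbar(F)$. By the definition of $C\Wbar$ as the supremum of convex minorants and Remark~\ref{remark:envelopesInequality},
\[
    \gtilde^{**}(h(F)) \;\leq\; C\Wbar(F) \;\leq\; P\Wbar(F) \;\leq\; Q\Wbar(F) \;\leq\; R\Wbar(F)\,.
\]

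The substantive step is the reverse inequality $R\Wbar(F) \leq \gtilde^{**}(h(F))$, which I would obtain by explicit rank-one lamination. By Carath\'eodory in one variable together with the evenness of $\gtilde$, the value $\gtilde^{**}(h(F))$ equals the infimum of $\lambda_1 g(t_1) + \lambda_2 g(t_2)$ over all $\lambda_i \geq 0$ with $\lambda_1 + \lambda_2 = 1$ and $t_i \geq 0$ with $\lambda_1 t_1 + \lambda_2 t_2 = h(F)$. For any such decomposition the goal is to construct rank-one connected $F_1,F_2 \in \R^{2\times2}$ with $\lambda_1 F_1 + \lambda_2 F_2 = F$ and $h(F_i) = t_i$. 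The starting point is the identity
\[
    h(F + sH)^2 \;=\; h(F)^2 + 2s\,\langle L(F),L(H)\rangle + s^2\,\norm{H}^2\,,
\]
valid for every rank-one $H$ (since $\det H = 0$ and $h(H)=\norm{H}$), where $\langle\cdot,\cdot\rangle$ denotes the Euclidean inner product on $\R^2$. I would then choose a rank-one $H$ with $L(H)$ parallel to (and of the same sign as) $L(F)$ and prescribed magnitude $\mu := \norm{H}$---this is possible because the anti-conformal part of $H$ can be freely aligned and the conformal part then adjusted in Frobenius norm to enforce $\det H = 0$. Equality in Cauchy--Schwarz then yields $h(F + sH) = \abs{h(F) + s\mu}$. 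Setting $F_1 = F - \lambda_2\tau H$, $F_2 = F + \lambda_1\tau H$ and tuning $\tau>0$ so that $\tau\mu = (t_2 - h(F))/\lambda_1 = (h(F)-t_1)/\lambda_2$ produces the desired laminate; rank-one convexity of $R\Wbar$ then gives $R\Wbar(F) \leq \lambda_1 g(t_1) + \lambda_2 g(t_2)$, and passing to the infimum closes the chain.

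I expect the explicit lamination to be the main obstacle. The convex direction is essentially automatic once $h$ is recognized as a seminorm, but the construction of $H$ hinges on the structural fact that rank-one $2\times2$ matrices are precisely those whose conformal and anti-conformal parts have equal Frobenius norms---exactly the flexibility needed to align $L(H)$ with $L(F)$ while preserving $\det H = 0$. The degenerate case $h(F) = 0$ (i.e.\ $F$ conformal) must be treated separately but is trivial, as the only admissible split is then $t_1 = t_2 = 0$ and the laminate $F_1 = F_2 = F$ suffices.
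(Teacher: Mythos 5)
The paper itself gives no proof of this proposition---it is imported verbatim from Dacorogna and Koshigoe---so your attempt has to stand on its own. Most of it does: the identity $\norm{F}^2-2\det F=(F_{11}-F_{22})^2+(F_{12}+F_{21})^2$, the resulting seminorm $h(F)=\abs{L(F)}$ with $L$ linear, the fact that $h(H)=\norm{H}$ for rank-one $H$, the easy chain $\gtilde^{**}\circ h\leq C\Wbar\leq\dotsb\leq R\Wbar$, and the mechanism of the lamination (aligning $L(H)$ with $L(F)$, which is always possible since $\det H=0$ is equivalent to the conformal and anti-conformal parts of $H$ having equal Frobenius norm) are all correct.

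The gap is in your characterization of $\gtilde^{**}(h(F))$. It is \emph{not} true that $\gtilde^{**}(s)$ equals the infimum of $\lambda_1 g(t_1)+\lambda_2 g(t_2)$ over \emph{nonnegative} $t_i$ with $\lambda_1t_1+\lambda_2t_2=s$: that infimum is the convex envelope of $g$ on the half-line $[0,\infty)$, whereas $\gtilde^{**}|_{[0,\infty)}=C\gtilde|_{[0,\infty)}$ is the \emph{monotone}-convex envelope $\Cm g$ (precisely the object appearing in Theorem \ref{theorem:convexEnvelopes}), which is strictly smaller wherever $C(g|_{[0,\infty)})$ is decreasing. Concretely, take $g(t)=(t-1)^2$ and $F=\id$, so $h(\id)=0$: your constraint forces $t_1=t_2=0$ and returns the value $g(0)=1$, while $\gtilde^{**}(0)=C\gtilde(0)=0$, attained by the laminate $\id=\tfrac12\diag(0,1)+\tfrac12\diag(2,1)$, i.e.\ by the \emph{signed} decomposition $x_1=-1$, $x_2=+1$, $\tfrac12x_1+\tfrac12x_2=0$. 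Evenness of $\gtilde$ lets you replace $x_i$ by $\abs{x_i}$ in the values $\gtilde(x_i)=g(\abs{x_i})$ but not in the barycentric constraint, since $\lambda_1\abs{x_1}+\lambda_2\abs{x_2}\geq\abs{\lambda_1x_1+\lambda_2x_2}$ with possibly strict inequality; so as written your argument only proves $R\Wbar(F)\leq C(g|_{[0,\infty)})(h(F))$, which does not close the chain. The repair is small and already latent in your own identity: run the Carath\'eodory representation over signed $x_1\leq x_2$ in $\R$ with $\lambda_1x_1+\lambda_2x_2=h(F)$, put $F_i=F+s_iH$ with $s_i\mu=x_i-h(F)$; then $h(F_i)=\abs{h(F)+s_i\mu}=\abs{x_i}$, hence $\Wbar(F_i)=g(\abs{x_i})=\gtilde(x_i)$, and the lamination yields $R\Wbar(F)\leq\lambda_1\gtilde(x_1)+\lambda_2\gtilde(x_2)$, whose infimum is now genuinely $C\gtilde(h(F))=\gtilde^{**}(h(F))$ (provided $\gtilde$ admits an affine minorant, cf.\ Remark \ref{remark:biconjugateConvexEnvelope}). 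Note that the case $h(F)=0$, which you dismiss as trivial, is exactly the case where the signed decomposition is indispensable.
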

\begin{remark}\label{remark:biconjugateConvexEnvelope}
	Since $\gtilde$ is finite valued on $\R$, the equality $\gtilde^{**}=C\gtilde$ between the biconjugate and the convex envelope of $\gtilde$ holds if $\gtilde$ is bounded below \cite[Theorem 2.43]{Dacorogna08}.
\end{remark}

Due to \eqref{eq:lambdaDifferenceDetRepresentation}, the restriction $W=\Wbar\big|_{\SL(2)}$ of any function $\Wbar\col\R^{2\times2}\to\R$ of the form \eqref{eq:dacorognaConformalSpecialCase} to $\SL(2)$ can be written as
\[
	W(F) = \Wbar(F) = g(\abs{\lambda_1-\lambda_2}) = \gtilde(\lambda_1-\lambda_2)
\]
for $F\in\SL(2)$, i.e.\ in the form \eqref{eq:energyRepresentationSingularValueDifference} with $\phi=g$ and $\widetilde{\phi}=\gtilde$. Similarly, any objective and isotropic $W\col\SL(2)\to\R$ can be uniquely extended to a function $\Wbar\col\R^{2\times2}\to\R$ of the form \eqref{eq:dacorognaConformalSpecialCase} by letting $g=\phi$ or, equivalently, $\gtilde=\widetilde{\phi}$.

\medskip

However, despite this striking connection, Proposition \ref{prop:dacorogna} is not immediately applicable to the case of functions defined on $\SL(2)$: Note carefully that the convex envelopes in eq.\ \eqref{eq:dacorognaResult} take into account not only the value of $W$ on $\SL(2)$, but also the value of a \emph{specific} extension $\What$ of $W$ to $\R^{2\times2}$. The underlying difference is that the notion of (generalized) convexity on a subset of $\R^{2\times2}$ (cf.\ Definition \ref{def:convexityPropertiesSL}) requires $W$ to have \emph{any} extension to $\R^{2\times2}$ satisfying the respective convexity property, which is, a priori, not necessarily of the form \eqref{eq:dacorognaConformalSpecialCase}.

On the other hand, Proposition \ref{prop:dacorogna} can be used to obtain lower bounds for the envelopes of incompressible energies; particularly,
\begin{alignat}{2}
	C\Wbar(F) &= \sup \{ w(F) \setvert w\col\R^{2\times2}\to\R \,\text{ convex, }\;& w(X)&\leq \Wbar(X) \,\text{ for all }X\in\R^{2\times2} \}\nonumber\\
	&\leq \sup \{ w(F) \setvert w\col\R^{2\times2}\to\R \,\text{ convex, }\;& w(X)&\leq \Wbar(X) \,\text{ for all }X\in\SL(2) \} \label{eq:convexEnvelopeExtensionInequality}\\
	&= \sup \{ w(F) \setvert w\col\R^{2\times2}\to\R \,\text{ convex, }\;& w(X)&\leq W(X) \,\text{ for all }X\in\SL(2) \}
	\;=\; CW(F) \nonumber
\end{alignat}
for any objective and isotropic function $W\col\SL(2)\to\R$ and all $F\in\SL(2)$, where $\Wbar\col\R^{2\times2}\to\R$ denotes the extension of $W$ described above. Again, note carefully that it is not immediately obvious whether equality holds in \eqref{eq:convexEnvelopeExtensionInequality}.

In order to fully establish a result similar to Proposition \ref{prop:dacorogna} in the incompressible case, we will require the following criteria for generalized convexity properties.

\begin{theorem}[\cite{agn_ghiba2018rank}]\label{theorem:convexityCriterion}
	Let $W\col\SL(2)\to\R$ be an objective and isotropic function. Then the following are equivalent:
	\begin{itemize}
		\item[i)] $W$ is rank-one convex,
		\item[ii)] $W$ is polyconvex,
		\item[iii)] the function $\widetilde{\phi}\col\R\to\R$ with $W(F) = \widetilde{\phi}(\lambda_1-\lambda_2)$ for all $F\in\SL(2)$ with singular values $\lambda_1,\lambda_2$ is convex,
		\item[iv)] the function $\phi\col[0,\infty)\to\R$ with $W(F) = \phi(\sqrt{\norm{F}^2-2})=\phi\Bigl(\lambdamax(F)-\displaystyle\frac{1}{\lambdamax(F)}\Bigr)$ is nondecreasing and convex.
	\end{itemize}
\end{theorem}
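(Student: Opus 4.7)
\medskip
\noindent
\textbf{Proof plan.} I would organize the equivalences as a cycle $(\mathrm{ii})\Rightarrow(\mathrm{i})\Rightarrow(\mathrm{iv})\Rightarrow(\mathrm{ii})$, together with the purely algebraic equivalence $(\mathrm{iii})\Leftrightarrow(\mathrm{iv})$. The step $(\mathrm{ii})\Rightarrow(\mathrm{i})$ is already contained in Remark \ref{remark:envelopesInequality}. For $(\mathrm{iii})\Leftrightarrow(\mathrm{iv})$, the constraint $\lambda_1\lambda_2=1$ on $\SL(2)$ gives $\abs{\lambda_1-\lambda_2}=\lambdamax(F)-1/\lambdamax(F)$, so $\widetilde{\phi}(t)=\phi(\abs{t})$; an even function on $\R$ of this form is convex if and only if $\phi$ is convex and nondecreasing on $[0,\infty)$, the monotonicity precisely encoding that the right derivative of $\widetilde{\phi}$ at $0$ must be nonnegative.

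For $(\mathrm{i})\Rightarrow(\mathrm{iv})$, a single rank-one family of test matrices is enough. I would consider
\[
	F(t) \;=\; I+t\,e_1\otimes e_2 \;=\; \begin{pmatrix}1&t\\0&1\end{pmatrix}\in\SL(2)\,,\qquad t\in\R\,,
\]
whose pairwise differences are rank one, so this line is a rank-one segment lying entirely inside $\SL(2)$. Since $\norm{F(t)}^2=2+t^2$ and $\det F(t)=1$, identity \eqref{eq:lambdaDifferenceDetRepresentation} yields $\abs{\lambda_1-\lambda_2}=\abs{t}$, and Lemma \ref{lemma:representationsSL} then gives $W(F(t))=\phi(\abs{t})$. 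Rank-one convexity of $\What$ along this path therefore forces $t\mapsto\phi(\abs{t})$ to be convex on $\R$, which is exactly $(\mathrm{iii})$ and, by the previous paragraph, $(\mathrm{iv})$.

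The substantial step, and what I expect to be the main obstacle, is $(\mathrm{iv})\Rightarrow(\mathrm{ii})$. Here I would extend $W$ globally to
\[
	\overline{W}\col\R^{2\times2}\to\R\,,\qquad \overline{W}(F)=\phi\bigl(\sqrt{\norm{F}^2-2\.\det F}\.\bigr)\,,
\]
which agrees with $W$ on $\SL(2)$ by \eqref{eq:lambdaDifferenceDetRepresentation} and which fits the framework of Proposition \ref{prop:dacorogna} with $g=\phi$ and $\widetilde{g}=\widetilde{\phi}$. Under $(\mathrm{iv})$ the function $\widetilde{\phi}$ is convex and even, hence attains its minimum at $0$ and is bounded below by $\phi(0)$; Remark \ref{remark:biconjugateConvexEnvelope} then yields $\widetilde{\phi}^{**}=\widetilde{\phi}$, and Proposition \ref{prop:dacorogna} gives $C\overline{W}=\overline{W}$, so $\overline{W}$ is convex on all of $\R^{2\times 2}$. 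This supplies exactly the global convex extension required by the equivalence between polyconvexity of $W\col\SL(2)\to\R$ and existence of a convex extension to $\R^{2\times 2}$ (discussed after Definition \ref{def:envelopesSL}), closing the cycle. The delicate point is precisely here: polyconvexity on $\SL(2)$ is a priori only an abstract existence statement about extensions, and it is Proposition \ref{prop:dacorogna} that makes the explicit conformal-type $\overline{W}$ eligible as the required witness of convexity on the whole matrix space.
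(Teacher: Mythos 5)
The paper does not actually prove Theorem \ref{theorem:convexityCriterion} --- it is quoted as an external result from \cite{agn_ghiba2018rank} --- so there is no in-text proof to compare yours against; judged on its own terms, your argument is correct and essentially reconstructs the result the paper delegates to that reference. The cycle is sound: (ii)$\Rightarrow$(i) is covered by Remark \ref{remark:envelopesInequality}; the equivalence (iii)$\Leftrightarrow$(iv) for the even function $\widetilde{\phi}(t)=\phi(\abs{t})$ is standard (an even convex function attains its minimum at $0$, hence its restriction to $[0,\infty)$ is nondecreasing, and conversely a convex nondecreasing $\phi$ composed with $\abs{\,\cdot\,}$ is convex); the simple-shear line $F(t)=I+t\,e_1\otimes e_2$ has rank-one increments, lies entirely in $\SL(2)$, and satisfies $W(F(t))=\widetilde{\phi}(t)$ by \eqref{eq:lambdaDifferenceDetRepresentation} and evenness, so rank-one convexity of $\What$ restricted to this line gives (iii); and under (iv) your extension $\Wbar(F)=\phi(\sqrt{\norm{F}^2-2\det F})$ is a globally convex function agreeing with $W$ on $\SL(2)$, which yields polyconvexity via the remark following Definition \ref{def:convexityPropertiesSL}. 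One observation: for the step (iv)$\Rightarrow$(ii) you do not need the full relaxation statement of Proposition \ref{prop:dacorogna}. Since $\norm{F}^2-2\det F=(F_{11}-F_{22})^2+(F_{12}+F_{21})^2$, the map $F\mapsto\sqrt{\norm{F}^2-2\det F}$ is a seminorm on $\R^{2\times2}$, so $\Wbar$ is convex directly as the composition of a convex nondecreasing $\phi$ with a seminorm; invoking Proposition \ref{prop:dacorogna} (whose own proof rests on this observation) is correct but heavier machinery than the step requires, and avoiding it also sidesteps any discussion of whether $\widetilde{\phi}^{**}=\widetilde{\phi}$ and whether boundedness from below is needed (it is automatic under (iv) anyway, since $\phi$ is nondecreasing).
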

The characterization of polyconvex energies on $\SL(2)$ by criterion iv) in Theorem \ref{theorem:convexityCriterion} is originally due to Mielke \cite{mielke2005necessary}. A criterion for the rank-one convexity of a twice differentiable energy in terms of the representation $W(F)=\Psi(\norm{F}^2)=\Psi(\lambda_1^2+\lambda_2^2)$ has previously been given by Abeyaratne \cite{abeyaratne1980discontinuous}.

Using Theorem \ref{theorem:convexityCriterion}, it is possible to find an explicit representation of the generalized convex envelopes $RW$, $QW$, $PW$ and $CW$ for any isotropic and objective function on $\SL(2)$.

\begin{theorem}\label{theorem:convexEnvelopes}
	Let $W\col\SL(2)\to\R$ be objective, isotropic and bounded below. Then
	\begin{equation}\label{eq:convexEnvelopesFormula}
		RW(F) = QW(F) = PW(F) = CW(F) = C\widetilde{\phi}(\lambda_1-\lambda_2) = \Cm\phi\left(\lambdamax(F)-\displaystyle\frac{1}{\lambdamax(F)}\right)
	\end{equation}
	for all $F\in\SL(2)$ with singular values $\lambda_1,\lambda_2$, where $\lambdamax(F)=\max\{\lambda_1,\lambda_2\}$ and $\Cm\phi\col[0,\infty)\to\R$ denotes the \emph{monotone-convex envelope} of $\phi$, given by
	\[
		\Cm\phi(t) \colonequals \sup\Big\{p(t) \setvert p\col[0,\infty)\to\R \text{ monotone increasing and convex with } p(s)\leq \phi(s)\;\forall\,s\in[0,\infty)\Big\}\,,
	\]
	i.e.\ the largest monotone and convex function bounded above by $\phi$.
\end{theorem}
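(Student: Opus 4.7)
The plan is to produce the explicit candidate $V(F)\colonequals \Cm\phi(\lambdamax(F)-1/\lambdamax(F))$ on $\SL(2)$ and sandwich all four envelopes between it: Proposition \ref{prop:dacorogna} will deliver the chain $V\leq CW\leq PW\leq QW\leq RW$ directly, and applying Theorem \ref{theorem:convexityCriterion} to $RW$ itself will yield the matching upper bound $RW\leq V$.

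First I would verify the equality of the two expressions in \eqref{eq:convexEnvelopesFormula}. Since $\lambda_1\lambda_2=1$ on $\SL(2)$, we have $\lambdamax-1/\lambdamax=\abs{\lambda_1-\lambda_2}$, so the point is the identity $C\widetilde{\phi}(t)=\Cm\phi(\abs{t})$ on $\R$. Isotropy forces $\widetilde{\phi}$ to be even, hence $C\widetilde{\phi}$ is even and convex; its restriction to $[0,\infty)$ is then a nondecreasing convex function dominated by $\phi$, so it is majorized by $\Cm\phi$. Conversely, $t\mapsto \Cm\phi(\abs{t})$ is even and convex (composition of the nondecreasing convex $\Cm\phi$ with $\abs{\,\cdot\,}$) and is dominated by $\widetilde{\phi}$, so it is majorized by $C\widetilde{\phi}$.

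For the chain $V\leq CW\leq PW\leq QW\leq RW$ I would apply Proposition \ref{prop:dacorogna} to the extension $\Wbar(F)\colonequals \Cm\phi(\sqrt{\norm{F}^2-2\det F})$ on $\R^{2\times 2}$. Its even extension $\Cm\phi(\abs{\,\cdot\,})$ is already convex on $\R$ (composition of a nondecreasing convex function with $\abs{\,\cdot\,}$), so Proposition \ref{prop:dacorogna} combined with Remark \ref{remark:biconjugateConvexEnvelope} yields $C\Wbar=\Wbar$, making $\Wbar$ a globally convex, real-valued function. On $\SL(2)$ the identity $\sqrt{\norm{F}^2-2\det F}=\abs{\lambda_1-\lambda_2}$ gives $\Wbar(F)=V(F)\leq W(F)$, while off $\SL(2)$ the finite value $\Wbar(F)$ is trivially dominated by $\What(F)=+\infty$. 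Thus $\Wbar$ is a convex competitor for each envelope in Definition \ref{def:envelopesGloballyDefinedFunctions}, which gives $V=\Wbar|_{\SL(2)}\leq CW\leq PW\leq QW\leq RW$.

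For the reverse inequality $RW\leq V$ I would apply Theorem \ref{theorem:convexityCriterion} to $RW$ in place of $W$. Three routine verifications supply its hypotheses: $RW\col\SL(2)\to\R$ is finite-valued because $\inf W\leq RW\leq W<\infty$; it is objective and isotropic because the affine maps $F\mapsto R_1FR_2$ preserve rank-one convexity of competitors and leave $\What$ invariant; and its natural extension $\widehat{RW}$ is rank-one convex because every rank-one segment joining two points of $\SL(2)$ stays inside $\SL(2)$ (the function $t\mapsto\det(F+tH)$ is linear when $\rank H=1$), so the rank-one convexity of $R\What$ on $\R^{2\times 2}$ transfers to the required inequality on $\SL(2)$-rank-one segments, while the extension-by-$+\infty$ convention makes all other segments trivially admissible. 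Theorem \ref{theorem:convexityCriterion}(i)$\Rightarrow$(iv) then produces a nondecreasing convex $\phi_{RW}\col[0,\infty)\to\R$ with $RW(F)=\phi_{RW}(\lambdamax(F)-1/\lambdamax(F))$. Since $RW\leq W$ forces $\phi_{RW}\leq\phi$ pointwise, the maximality built into the definition of $\Cm\phi$ yields $\phi_{RW}\leq\Cm\phi$, so $RW\leq V$ and the chain collapses.

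I expect the only nontrivial step to be the verification that $\widehat{RW}$ is rank-one convex, which is what legitimizes applying Theorem \ref{theorem:convexityCriterion} to $RW$ and bridges the abstract envelope construction with the isotropic convexity criterion; everything else is bookkeeping around Proposition \ref{prop:dacorogna}, Remark \ref{remark:biconjugateConvexEnvelope}, and the defining suprema.
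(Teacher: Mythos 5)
Your proposal is correct and follows essentially the same strategy as the paper's proof: identify $C\widetilde{\phi}$ with $\Cm\phi$ via evenness, obtain the lower bound $C\widetilde{\phi}(\lambda_1-\lambda_2)\leq CW(F)$ from the Dacorogna--Koshigoe result (Proposition \ref{prop:dacorogna}), and obtain the upper bound by applying Theorem \ref{theorem:convexityCriterion} to the objective, isotropic and rank-one convex function $RW$. The only differences are cosmetic: you exhibit the convex competitor $\Cm\phi(\sqrt{\norm{F}^2-2\det F})$ directly where the paper passes through $C\Wbar$ for the extension $\Wbar$ of $W$, and you spell out the objectivity, isotropy and rank-one convexity of $RW$, which the paper delegates to a citation.
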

\begin{proof}
	Since $\widetilde{\phi}(-t)=\widetilde{\phi}(t)=\phi(t)$ for all $t\geq0$, it is easy to see (cf.\ \cite{dacorogna1993different}) that $\Cm\phi(t)=C\widetilde{\phi}(t)=C\widetilde{\phi}(-t)$ for all $t\geq0$ and thus, in particular,
	\[
		C\widetilde{\phi}(\lambda_1-\lambda_2) = \Cm\phi(\abs{\lambda_1-\lambda_2}) = \Cm\phi\left(\lambdamax(F)-\frac{1}{\lambdamax(F)}\right)
	\]
	for all $\lambda_1,\lambda_2>0$. Furthermore, Remark \ref{remark:envelopesInequality} establishes the inequalities $CW(F) \leq PW(F) \leq QW(F) \leq RW(F)$, thus it remains to show that $RW(F)\leq C\widetilde{\phi}(\lambda_1-\lambda_2)\leq CW(F)$.
	
	According to Lemma \ref{lemma:representationsSL}, there exists a uniquely determined $\widetilde{\psi}\col\R\to\R$ such that $RW(F)=\widetilde{\psi}(\lambda_1-\lambda_2)$ for all $F\in\SL(2)$ with singular values $\lambda_1,\lambda_2$.\footnote{%
		Note that the rank-one convex envelope of an objective and isotropic function is itself objective and isotropic \cite{buttazzo1994envelopes}.%
	}
	Due to the rank-one convexity of $RW$ and Theorem~\ref{theorem:convexityCriterion}, the function $\widetilde{\psi}$ is convex. Since
	\[
		\widetilde{\psi}(t) = RW\Bigl(\diag\bigl(\tfrac{\sqrt{4+t^2}+t}{2},\tfrac{\sqrt{4+t^2}-t}{2}\bigr)\Bigr) \leq W\Bigl(\diag\bigl(\tfrac{\sqrt{4+t^2}+t}{2},\tfrac{\sqrt{4+t^2}-t}{2}\bigr)\Bigr) = \widetilde{\phi}(t)
	\]
	as well, we find $\widetilde{\psi}(t)\leq C \widetilde{\phi}(t)$ for all $t\in\R$ and thus
	\[
		RW(F) = \widetilde{\psi}(\lambda_1-\lambda_2) \leq C\widetilde{\phi}(\lambda_1-\lambda_2)
	\]
	for all $F\in\SL(2)$.
	
	Now, in order to establish the remaining inequality $C\widetilde{\phi}(\lambda_1-\lambda_2)\leq CW(F)$, let
	\[
		\Wbar\col\R^{2\times2}\to\R\,,\quad \Wbar(F)=\widetilde{\phi}(\sqrt{\norm{F}-2\.\det F})
	\]
	denote the unique extension of $W$ to $\R^{2\times2}$ of the form \eqref{eq:dacorognaConformalSpecialCase}. Then using \eqref{eq:convexEnvelopeExtensionInequality} and Remark \ref{remark:biconjugateConvexEnvelope}, we find
	\[
		CW(F)\geq C\Wbar(F) = \widetilde{\phi}^{**}(\sqrt{\norm{F}-2\.\det F}) = \widetilde{\phi}^{**}(\lambda_1-\lambda_2) = C\widetilde{\phi}(\lambda_1-\lambda_2)
	\]
	for all $F\in\SL(2)$ with singular values $\lambda_1,\lambda_2$.
\end{proof}
Another result similar to Theorem \ref{theorem:convexEnvelopes} has previously been obtained \cite{agn_martin2019envelope} for so-called \emph{conformally invariant} functions on $\GLp(2)$, i.e.\ any $W\col\GLp(2)\to\R$ satisfying
\[
	W(A\.F\.B) = W(F) \qquad\text{for all }\; A,B\in\{a\.R\in\GLp(2) \setvert a\in(0,\infty)\,,\; R\in\SO(2)\}\,,
\]
where $\SO(2)$ denotes the special orthogonal group. In addition to being objective and isotropic, such a function is \emph{isochoric}, i.e.\ invariant under (purely volumetric) scaling of the deformation gradient $F$.
\begin{remark}
	Due to eq.\ \eqref{eq:convexEnvelopesFormula}, the problem of finding the quasiconvex (as well as the rank-one convex and the polyconvex) envelope of $W$ reduces to the task of computing the convex envelope of a scalar function. This latter problem can, for example, be solved by using Maxwell's equal area rule \cite[p.~319]{silhavy1997mechanics} and often admits a direct analytical solution.
\end{remark}

Combining Theorem \ref{theorem:convexEnvelopes} with Proposition \ref{prop:dacorogna} also yields the following relation between the envelopes of incompressible energies and their extensions to $\R^{2\times2}$ of the form $\eqref{eq:dacorognaConformalSpecialCase}$; recall from Remark \ref{remark:biconjugateConvexEnvelope} that $C\widetilde{\phi}=\widetilde{\phi}^{**}$ if $\widetilde{\phi}\col\R\to\R$ is bounded below.

\begin{corollary}
\label{cor:dacorognaComparison}
	Let $W\col\SL(2)\to\R$ be objective, isotropic and bounded below. Define
	\[
		\Wbar\col\R^{2\times2}\to\R\,,\quad \Wbar(F) = \widetilde{\phi}(\sqrt{\norm{F}^2-2\.\det F})\,,
	\]
	where $\widetilde{\phi}\col\R\to\R$ is the uniquely determined function with $W(F) = \widetilde{\phi}(\lambda_1-\lambda_2)$ for all $F\in\SL(2)$ with singular values $\lambda_1,\lambda_2$. Then
	\begin{align*}
		RW=QW=PW=CW=C\widetilde{\phi}(\lambda_1-\lambda_2)&=\widetilde{\phi}^{**}(\sqrt{\norm{F}^2-2\.\det F})\\
		&= (R\Wbar)\big|_{\SL(2)}=(Q\Wbar)\big|_{\SL(2)}=(P\Wbar)\big|_{\SL(2)}=(C\Wbar)\big|_{\SL(2)}\,.
	\end{align*}
\end{corollary}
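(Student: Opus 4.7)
The corollary is really a bookkeeping result: the heavy lifting has already been done in Theorem \ref{theorem:convexEnvelopes} and Proposition \ref{prop:dacorogna}, so the plan is to stitch the two together using the identity \eqref{eq:lambdaDifferenceDetRepresentation} and the evenness of $\widetilde{\phi}$.

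First I would invoke Theorem \ref{theorem:convexEnvelopes} verbatim to obtain $RW(F)=QW(F)=PW(F)=CW(F)=C\widetilde{\phi}(\lambda_1-\lambda_2)$ for all $F\in\SL(2)$; this takes care of the entire first chain in the statement. The next step is to identify $C\widetilde{\phi}(\lambda_1-\lambda_2)$ with $\widetilde{\phi}^{**}(\sqrt{\norm{F}^2-2\.\det F})$. Since $W$ is isotropic, $\widetilde{\phi}$ is even, so its convex envelope $C\widetilde{\phi}$ is even as well and $C\widetilde{\phi}(\lambda_1-\lambda_2)=C\widetilde{\phi}(\abs{\lambda_1-\lambda_2})$. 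Because $W$ is bounded below, so is $\widetilde{\phi}$, and Remark \ref{remark:biconjugateConvexEnvelope} therefore gives $C\widetilde{\phi}=\widetilde{\phi}^{**}$ on all of $\R$. Combining with \eqref{eq:lambdaDifferenceDetRepresentation}, which reads $\abs{\lambda_1-\lambda_2}=\sqrt{\norm{F}^2-2\.\det F}$ on $\SL(2)$, yields the middle equality.

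For the last chain, I would apply Proposition \ref{prop:dacorogna} to the extension $\Wbar$ defined in the statement: with $g=\phi$ (equivalently $\gtilde=\widetilde{\phi}$) the hypotheses of that proposition are satisfied, so
\[
R\Wbar = Q\Wbar = P\Wbar = C\Wbar = \widetilde{\phi}^{**}\bigl(\sqrt{\norm{F}^2-2\.\det F}\bigr)
\]
on all of $\R^{2\times 2}$. Restricting to $\SL(2)$ matches each of these envelopes with the expression already obtained in the previous step, closing the chain.

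There is no real obstacle here; the only point that requires a moment of care is making sure that the passage between $\widetilde{\phi}(\lambda_1-\lambda_2)$ (which a priori depends on the sign of $\lambda_1-\lambda_2$) and $\widetilde{\phi}(\abs{\lambda_1-\lambda_2})$ is justified, which is where the evenness of $\widetilde{\phi}$ (and hence of its convex envelope) enters. Everything else is a direct citation of Theorem \ref{theorem:convexEnvelopes}, Proposition \ref{prop:dacorogna}, Remark \ref{remark:biconjugateConvexEnvelope}, and the algebraic identity \eqref{eq:lambdaDifferenceDetRepresentation}.
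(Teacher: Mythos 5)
Your argument is correct and is exactly the route the paper takes: the corollary is stated as a direct combination of Theorem \ref{theorem:convexEnvelopes}, Proposition \ref{prop:dacorogna} (applied to the extension $\Wbar$ with $\gtilde=\widetilde{\phi}$), the identity \eqref{eq:lambdaDifferenceDetRepresentation}, and Remark \ref{remark:biconjugateConvexEnvelope} for $C\widetilde{\phi}=\widetilde{\phi}^{**}$. Nothing is missing.
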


\medskip

As a simple example, we consider the restriction of the classical \emph{Alibert–Dacorogna–Marcellini} energy \cite{alibert1992example,dacorogna1988counterexample}
\[
	\WADM\col\R^{2\times2}\to\R\,,\quad \WADM(F) = \norm{F}^2\.(\norm{F}^2-2\gamma\.\det F)\,,\qquad \gamma\in\R
\]
to the special linear group $\SL(2)$, i.e.
\[
	W\col\SL(2)\to\R\,,\quad W(F) = \norm{F}^4-2\gamma\.\norm{F}^2\,,\qquad \gamma\in\R\,,
\]
where $\norm{\,.\,}$ denotes the Frobenius norm. It was shown by Alibert, Dacorogna and Marcellini \cite{alibert1992example,dacorogna1988counterexample} that different convexity properties hold for $\WADM$ depending on the exact value of $\gamma$, which strictly distinguishes convexity ($\abs{\gamma}\leq\frac{2\.\sqrt{2}}{3}$), polyconvexity ($\abs{\gamma}\leq1$) and rank-one convexity ($\abs{\gamma}\leq\frac{2}{\sqrt{3}}$); the question whether $\WADM$ is \emph{not} quasiconvex for some $\abs{\gamma}\leq\frac{2}{\sqrt{3}}$ is still open \cite{dacorogna1998some}.

In the incompressible case, of course, the energy is simplified considerably; in particular, $W$ is convex for any $\gamma\leq2$ as the composition of a monotone and convex function with the convex mapping $F\mapsto\norm{F}$. In general, since (using the equality $\lambda_1\lambda_2=1$) we find
\begin{align*}
	W(F) &= (\lambda_1^2+\lambda_2^2)^2-2\gamma\.(\lambda_1^2+\lambda_2^2)\\
	&= (\lambda_1-\lambda_2)^4 + (4-2\.\gamma)\cdot (\lambda_1-\lambda_2)^2 + 4 - 4\.\gamma
\end{align*}
for all $F\in\SL(2)$ with singular values $\lambda_1,\lambda_2$, the function $W$ can be expressed as $W(F)=\widetilde{\phi}(\lambda_1-\lambda_2)$ with
\[
	\widetilde{\phi}(t) = t^4 + (4-2\.\gamma)\cdot t^2 + 4 - 4\.\gamma\,.
\]
Then $\widetilde{\phi}''(t) = 12\.t^2 + 8 - 4\.\gamma$ is nonnegative for all $t\geq0$ if and only if $\gamma\leq2$, thus according to Theorem \ref{theorem:convexityCriterion}, $W$ is rank-one convex, quasiconvex polyconvex and/or convex on $\SL(2)$ \emph{only} if $\gamma\leq2$. For $\gamma>2$, we can explicitly compute the convex envelope (cf.\ Figure \ref{fig:examplePolynomialEnvelope})
\[
	C\widetilde{\phi}(t)=
	\begin{cases}
		\widetilde{\phi}(t) &: t^2\geq \gamma-2\,,\\
		-\gamma^2 &: t^2< \gamma-2\,.
	\end{cases}
\]
of $\widetilde{\phi}$ and use Theorem \ref{theorem:convexEnvelopes} to find the generalized convex envelopes of $W$, which are given by
\begin{align*}
	RW(F) = QW(F) = PW(F) = CW(F) = C\widetilde{\phi}(\lambda_1-\lambda_2)
	&=
	\begin{cases}
		W(F) &: (\lambda_1-\lambda_2)^2 \geq \gamma-2\\
		-\gamma^2 &: (\lambda_1-\lambda_2)^2 < \gamma-2
	\end{cases}
	\\&=
	\begin{cases}
		W(F) &: \norm{F}^2 \geq \gamma\\
		-\gamma^2 &: \norm{F}^2 < \gamma
	\end{cases}
\end{align*}
for any $F\in\SL(2)$ with singular values $\lambda_1,\lambda_2=\frac{1}{\lambda_1}$.

\begin{figure}[h!]
	\begin{center}
		\begin{tikzpicture}
		\def\gammacon{11}
		\def\extcon{3}
		\def\yoffset{73.5}
			\begin{axis}[
			axis x line=middle,axis y line=middle,
			x label style={at={(current axis.right of origin)},anchor=north, below},
			xlabel=$t$, ylabel={},
			xmin=-4.41, xmax=4.41,
			ymin=-53.9,
			ymax=53.9,
			width=1\linewidth,
			height=\graphRatio\linewidth,
			ytick=\empty,
			xtick={-\extcon,\extcon},
			xticklabels={$-\sqrt{\gamma-2}$,$\sqrt{\gamma-2}$},
			hide obscured x ticks=false,
	        width=.91\linewidth,
	        height=.42\linewidth
			]
				\addplot[black, smooth][domain=-4.9:4.9, samples=\sample]{x^4+(4-2*\gammacon)*x^2+4-4*\gammacon+\yoffset} node[pos=.49, above left] {$\widetilde{\phi}$};
				\addplot[blue, dashed, very thick][domain=-4.9:-3, samples=\sample]{x^4+(4-2*\gammacon)*x^2+4-4*\gammacon+\yoffset};
				\addplot[blue, dashed, very thick] coordinates { ({-\extcon}, {-\gammacon^2+\yoffset})({\extcon},{-\gammacon^2+\yoffset})} node[pos=.63, above right]{$C\widetilde{\phi}$};
				\addplot[blue, dashed, very thick][domain=3:4.9, samples=\sample]{x^4+(4-2*\gammacon)*x^2+4-4*\gammacon+\yoffset};
			\end{axis}
		\end{tikzpicture}
	\end{center}
	\caption{%
	\label{fig:examplePolynomialEnvelope}%
		The convex envelope $C\widetilde{\phi}$ of $\widetilde{\phi}\col\R\to\R$ with $\widetilde{\phi}(t) = t^4 + (4-2\.\gamma)\cdot t^2 + 4 - 2\.\gamma$.%
	}
\end{figure}

\medskip

\noindent A further classical example of an elastic energy applicable to the incompressible case is given by the logarithmic \emph{Hencky strain energy} \cite{Hencky1928,Hencky1929,agn_neff2015geometry,agn_neff2015exponentiatedII,agn_ghiba2015exponentiated}
\[
	\WH\col\SL(2)\to\R\,,\quad \WH(F) = \norm{\log V}^2 = \norm{\log\sqrt{FF^T}}^2 = \log^2(\lambda_1)+\log^2(\lambda_2)\,,
\]
where $\log V = \log\sqrt{FF^T}$ denotes the principal matrix logarithm of the stretch tensor $V=\sqrt{FF^T}$. Note that for $\det F = 1$, $\WH$ can equivalently be expressed as
\[
	\norm{\dev\log V}^2 = \norm{\log ((\det V)^{-\afrac12}\.V)}^2 = \norm{\log ((\det F)^{-\afrac12}\.V)}^2 = \norm{\log V}^2 = \WH(F)\,,
\]
where $\dev X = X-\frac{\tr X}{2}\.\id$ is the deviatoric part of $X\in\R^{2\times2}$ and $\id$ denotes the identity matrix. Since
\[
	\WH(F) = \log^2(\lambda_1)+\log^2(\lambda_2) = 2\.\log^2(\lambdamax(F))
	= 2\.\log^2 \left( \frac{\abs{\lambda_1-\lambda_2}+\sqrt{4+(\lambda_1-\lambda_2)^2}}{2} \right)
	\,,
\]
the representation $\WH(F)=\widetilde{\phi}(\lambda_1-\lambda_2)$ of the Hencky energy is given by
\[
	\widetilde{\phi}(t) = 2\.\log^2\left(\frac{\abs{t}+\sqrt{4+t^2}}{2}\right)\,.
\]
Due to the sublinear growth of $\widetilde{\phi}$, we find $C\widetilde{\phi}\equiv0$ and thus, using Theorem \ref{theorem:convexEnvelopes},
\[
	R\WH=Q\WH=P\WH=C\WH\equiv0\,.
\]

\section{References}
\printbibliography[heading=none]

\end{document}